\newcommand{\bdry}[1]{\partial #1}
\newcommand{\bgset}[1]{\big\{#1\big\}}
\newcommand{\A}{{\cal A}}
\newcommand{\D}{{\cal D}}
\newcommand{\closure}[1]{\overline{#1}}
\newcommand{\dint}{\ds{\int}}
\newcommand{\dist}[2]{\text{dist}\, (#1,#2)}
\newcommand{\ds}[1]{\displaystyle #1}
\newcommand{\eps}{\varepsilon}
\newcommand{\M}{{\cal M}}
\newcommand{\norm}[2][]{\left\|#2\right\|_{#1}}
\renewcommand{\o}{\text{o}}
\newcommand{\PS}[1]{$(\text{PS})_{#1}$}
\newcommand{\pnorm}[2][]{\if #1'' \left|#2\right|_p \else \left|#2\right|_{#1} \fi}
\newcommand{\QED}{\mbox{\qedhere}}
\newcommand{\R}{\mathbb R}
\newcommand{\restr}[2]{\left.#1\right|_{#2}}
\newcommand{\seq}[1]{\left(#1\right)}
\newcommand{\set}[1]{\left\{#1\right\}}
\newcommand{\spnorm}[2][]{\if #1'' |#2|_p \else |#2|_{#1} \fi}
\newcommand{\vol}[1]{\left|#1\right|}
\DeclareMathOperator*{\essinf}{ess\, inf}
\newtheorem{corollary}{Corollary}[section]
\newtheorem{lemma}[corollary]{Lemma}
\newtheorem{proposition}[corollary]{Proposition}
\newtheorem{theorem}[corollary]{Theorem}
\numberwithin{equation}{section}
\title{\bf Multiplicity results for critical $p$-Laplacian problems\thanks{{\em MSC2010:} 35J92, 35B33
\newline \indent\; {\em Key Words and Phrases:} $p$-Laplacian, critical Sobolev exponent, multiplicity, Palais-Smale sequences.}}
\author{\bf Giuseppina Barletta \& Pasquale Candito\\
Universit\`{a} degli Studi di Reggio Calabria\\
89100 Reggio Calabria, Italy\\
\em giuseppina.barletta@unirc.it \& pasquale.candito@unirc.it\\
[\bigskipamount]
\bf Salvatore A. Marano\\
Universit\`{a} degli Studi di Catania\\
95125 Catania, Italy\\
\em marano@dmi.unict.it\\
[\bigskipamount]
\bf Kanishka Perera\thanks{This work was completed partly while the fourth-named author was visiting Universit\`{a} di Reggio Calabria and partly while the second-named author was visiting the Florida Institute of Technology, and they are grateful for the kind hospitality of the host universities.}\\
Florida Institute of Technology\\
Melbourne, FL 32901, USA\\
\em kperera@fit.edu}
\date{}
\begin{document}

\maketitle

\begin{abstract}
We prove the existence of $N$ distinct pairs of nontrivial solutions for critical $p$-Laplacian problems in $\R^N$, as well as in bounded domains. To overcome the difficulties arising from the lack of compactness, we use a recent global compactness result of Mercuri and Willem.
\end{abstract}

\section{Introduction}

Consider the problem
\begin{equation} \label{1.1}
- \Delta_p\, u = a(x)\, |u|^{p-2}\, u + |u|^{p^\ast - 2}\, u, \quad u \in \D^{1,p}(\R^N),
\end{equation}
where $1 < p < N$, $p^\ast = Np/(N - p)$ is the critical Sobolev exponent, and $a \in L^{N/p}(\R^N)$ satisfies
\begin{equation} \label{1.2}
\inf_{u \in \D^{1,p}(\R^N),\; \pnorm{\nabla u} = 1}\, \int_{\R^N} \big(|\nabla u|^p - a(x)\, |u|^p\big)\, dx > 0
\end{equation}
and
\begin{equation} \label{1.3}
\essinf_{x \in B_\delta(x_0)}\, a(x) > 0
\end{equation}
for some open ball $B_\delta(x_0) \subset \R^N$. Here $\pnorm[q]{\cdot}$ denotes the norm in $L^q(\R^N)$. Let
\begin{equation} \label{1.4}
S = \inf_{u \in \D^{1,p}(\R^N) \setminus \set{0}}\, \frac{\dint_{\R^N} |\nabla u|^p\, dx}{\left(\dint_{\R^N} |u|^{p^\ast}\, dx\right)^{p/p^\ast}}
\end{equation}
be the best constant in the Sobolev inequality. We prove the following multiplicity result.

\begin{theorem} \label{Theorem 1.1}
Assume that $N \ge p^2$, and \eqref{1.2} and \eqref{1.3} hold. If
\begin{equation} \label{1.5}
\spnorm[N/p]{a^+} < \big(1 - 2^{-p/N}\big)\, S,
\end{equation}
where $a^+$ is the positive part of $a$ defined by $a^+(x) = \max \set{a(x),0}$, then problem \eqref{1.1} has at least $N$ pairs of nontrivial solutions.
\end{theorem}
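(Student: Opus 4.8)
The plan is to realize the solutions as critical points of the even $C^1$ functional
$E(u) = \frac{1}{p}\int_{\R^N}\big(|\nabla u|^p - a(x)\,|u|^p\big)\,dx - \frac{1}{p^\ast}\int_{\R^N}|u|^{p^\ast}\,dx$
on $W := \D^{1,p}(\R^N)$, whose nontrivial critical points are exactly the nontrivial solutions of \eqref{1.1}. First I would record the variational geometry: by \eqref{1.2} the $p$-homogeneous form $I(u) := \int_{\R^N}\big(|\nabla u|^p - a\,|u|^p\big)\,dx$ is positive and equivalent to $\pnorm{\nabla u}^p$, so $E(0)=0$ and $E$ is bounded below by a positive constant on a small sphere $\set{\pnorm{\nabla u}=r}$, giving the symmetric mountain-pass geometry. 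Since $E$ is even, the natural tool is a standard $\mathbb{Z}_2$-equivariant (pseudo-index) minimax theorem built on the Fadell--Rabinowitz cohomological index $i$.

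Second, I would settle compactness. Invoking the global compactness result of Mercuri and Willem, any \PS{c} sequence for $E$ decomposes, along a subsequence, as a weak limit $u_0$ (itself a solution of \eqref{1.1}) plus finitely many rescaled Aubin--Talenti extremals of \eqref{1.4}, each carrying energy at least $\frac{1}{N}\,S^{N/p}$, with the energies summing to $c$. Hence \PS{c} holds in the range in which no such bubble can form, and the relevant compactness threshold is an explicit multiple of $\frac{1}{N}\,S^{N/p}$ determined by \eqref{1.5}; keeping all minimax levels strictly inside this range is precisely the role of \eqref{1.5}.

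Third, to estimate the minimax levels I would use the identity $\max_{t>0} E(tu) = \frac{1}{N}\,Q(u)^{N/p}$, valid whenever $I(u)>0$, where $Q(u) = I(u)\big/\big(\int_{\R^N}|u|^{p^\ast}\,dx\big)^{p/p^\ast}$ is the Sobolev-type quotient associated with \eqref{1.4}. This converts every energy bound into a bound on $Q$ and reduces the whole matter to producing a symmetric subset $A$ of the sphere $\set{\int_{\R^N}|u|^{p^\ast}\,dx = 1}$ with $i(A)\ge N$ on which $\sup_A Q$ lies strictly below the threshold. I would build $A$ from the extremals $U_{\eps,y}$ concentrated at points $y$ of the ball $B_\delta(x_0)$ supplied by \eqref{1.3}, arranged in an $N$-parameter antipodally symmetric family. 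The hypothesis $N\ge p^2$ is exactly what guarantees $\int_{\R^N}|U_{\eps,y}|^p\,dx<\infty$, so that concentration yields the strict gain $Q(U_{\eps,y}) = S - C\eps^p a(y) + \o(\eps^p) < S$ for some $C>0$; and the sharp bound $\spnorm[N/p]{a^+} < (1-2^{-p/N})\,S$ is calibrated to absorb the bubble--bubble interaction energy (of order $2^{p/N}S$) and hold $\sup_A Q$ under the threshold for the entire index-$N$ family.

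Finally, feeding the geometry, the compactness range, and this index-$N$ test family into the equivariant minimax theorem produces values $0 < c_1 \le \cdots \le c_N$ below the threshold; by the deformation lemma each is a critical value, and if several coincide the cohomological index of the corresponding critical set is at least two, so in every case \eqref{1.1} has at least $N$ pairs of nontrivial solutions. I expect the main obstacle to be the third step: forcing the index of the test family up to $N$ while simultaneously holding $\sup_A Q$ beneath the compactness threshold, since enlarging the family tends to raise its energy through the interaction terms. It is exactly this tension that the constant $1-2^{-p/N}$ in \eqref{1.5} resolves, and carrying out the interaction estimates for the $U_{\eps,y}$ (crucially using $N\ge p^2$) is the technical heart of the argument.
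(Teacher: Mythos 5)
There is a genuine gap, and it sits exactly where you predicted the ``technical heart'' would be: your second and third steps are mutually incompatible under hypothesis \eqref{1.5}. Since every bubble in the Mercuri--Willem decomposition costs at least $\frac{1}{N}S^{N/p}$, the only range where the Palais--Smale condition actually holds is $c < \frac{1}{N}S^{N/p}$; your plan is to fit an index-$N$ test family under this threshold. But no symmetric family of index (or genus) $\ge 2$ can have maximal energy below $\frac{1}{N}S^{N/p}$ once \eqref{1.5} holds. Indeed, as in Lemma \ref{Lemma 2.3}: since the Fadell--Rabinowitz index is a lower bound for the genus, any such family (radially projected onto $\M$, which preserves oddness and continuity) has genus $\ge 2$ and therefore contains a point $u_0$ with $u_0^\pm \in \M$, whence $\max \Phi \ge \Phi(u_0^+) + \Phi(u_0^-) \ge 2c_0$; by Lemma \ref{Lemma 2.4} and \eqref{1.5}, $2c_0 \ge \frac{2}{N}\big(S - \spnorm[N/p]{a^+}\big)^{N/p} > \frac{1}{N}S^{N/p}$. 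So \eqref{1.5} does the opposite of what you assign to it: it forces every level beyond the first \emph{above} the no-bubble threshold, rather than keeping all levels below it. Your heuristic about the constant is also off: the paper's test family uses two bubbles with \emph{disjoint} supports (no interaction energy to absorb), and $1 - 2^{-p/N}$ is calibrated precisely so that twice the ground-state level exceeds the energy of a single bubble, i.e.\ $\frac{2}{N}\big(S-\spnorm[N/p]{a^+}\big)^{N/p} > \frac{1}{N}S^{N/p}$.

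Consequently the levels $c_1,\dots,c_N$ necessarily lie in the window $\big(\frac{1}{N}S^{N/p}, \frac{2}{N}S^{N/p}\big)$, where Palais--Smale genuinely fails (a sequence can shed exactly one bubble), so your final step --- deformation lemma, plus ``index of the critical set $\ge 2$'' when levels coincide --- has no justification as stated. The paper's substitute for compactness is Lemma \ref{Lemma 2.2}: in that window, the Mercuri--Willem decomposition combined with classification results for entire solutions of $-\Delta_p u = |u|^{p^\ast-2}u$ (sign-changing solutions have energy $\ge \frac{2}{N}S^{N/p}$, and positive solutions have energy exactly $\frac{1}{N}S^{N/p}$ by Sciunzi's theorem) shows that at most one bubble can form, so the weak limit of a \PS{c} sequence is a \emph{nontrivial} solution at level $c$ or $c - \frac{1}{N}S^{N/p}$. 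The count then goes: if $c_1 < \cdots < c_N$, the $2N$ candidate levels $c_k$ and $c_k - \frac{1}{N}S^{N/p}$ are pairwise distinct, giving $N$ pairs of solutions; if some $c_k = c_{k+1}$, the usual genus argument is unavailable (again for lack of compactness) and is replaced by the separate functional-analytic argument of Lemma \ref{Lemma 2.7}. A further minor slip: at $N = p^2$ the Talenti extremal is \emph{not} in $L^p(\R^N)$ (the integral diverges logarithmically), which is why the paper works with truncated bubbles and estimate \eqref{2.8} carries a $|\log \eps|$ factor; your parenthetical claim that $N \ge p^2$ guarantees integrability is false in the borderline case.
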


Sufficient conditions for the existence of a positive solution of problem \eqref{1.1} when $p \ge 2$ and $a(x) \le 0$ for all $x \in \R^N$ were given by Alves \cite{MR1926623}. Related multiplicity results for the subcritical scalar field equation in $\R^N$ can be found in Clapp and Weth \cite{MR2103845} and Perera \cite{MR3259012}.

We prove the multiplicity result in Theorem \ref{Theorem 1.1} in bounded domains also (see Theorem \ref{Theorem 3.1}). In particular, we have the following corollary for the problem
\begin{equation} \label{1.6}
- \Delta_p\, u = \lambda\, |u|^{p-2}\, u + |u|^{p^\ast - 2}\, u, \quad u \in W^{1,p}_0(\Omega),
\end{equation}
where $\Omega$ is a smooth bounded domain in $\R^N$ and $1 < p < N$.

\begin{corollary} \label{Corollary 1.2}
If $N \ge p^2$ and
\begin{equation} \label{1.7}
0 < \lambda < \big(1 - 2^{-p/N}\big)\, S \vol{\Omega}^{-p/N},
\end{equation}
where $\vol{\Omega}$ is the Lebesgue measure of $\Omega$, then problem \eqref{1.6} has at least $N$ pairs of nontrivial solutions.
\end{corollary}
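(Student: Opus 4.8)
The plan is to obtain Corollary \ref{Corollary 1.2} as the special case of Theorem \ref{Theorem 3.1} (the bounded-domain analog of Theorem \ref{Theorem 1.1}) corresponding to the constant weight $a(x) \equiv \lambda$ on $\Omega$. With this choice problem \eqref{1.6} is exactly the bounded-domain problem handled by Theorem \ref{Theorem 3.1}, so it suffices to verify that the three structural hypotheses---the positivity condition \eqref{1.3}, the coercivity condition \eqref{1.2}, and the smallness condition \eqref{1.5}, each read on $\Omega$ with $W^{1,p}_0(\Omega)$ in place of $\D^{1,p}(\R^N)$---all follow from \eqref{1.7}, the requirement $N \ge p^2$ being inherited directly.

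Two of these are immediate. Since $\lambda > 0$, the weight $a \equiv \lambda$ is positive on every ball $B_\delta(x_0) \subset \Omega$, so the analog of \eqref{1.3} holds trivially. For the smallness condition, $a^+ \equiv \lambda$ gives $\spnorm[N/p]{a^+} = \lambda\, \vol{\Omega}^{p/N}$, so the requirement $\spnorm[N/p]{a^+} < \big(1 - 2^{-p/N}\big)\, S$ is precisely \eqref{1.7} after dividing by $\vol{\Omega}^{p/N}$.

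The main step is the coercivity condition \eqref{1.2}. On $\Omega$ with the constant weight it reads $1 - \lambda \int_\Omega |u|^p\, dx > 0$ for every $u \in W^{1,p}_0(\Omega)$ with $\pnorm{\nabla u} = 1$, which is equivalent to $\lambda < \lambda_1(\Omega)$, where $\lambda_1(\Omega)$ denotes the first Dirichlet eigenvalue of $- \Delta_p$ on $\Omega$. The inequality I would invoke is $\lambda_1(\Omega) \ge S\, \vol{\Omega}^{-p/N}$: for $u \in W^{1,p}_0(\Omega) \setminus \set{0}$, Hölder's inequality with conjugate exponents $p^\ast/p$ and $N/p$ gives $\int_\Omega |u|^p\, dx \le \vol{\Omega}^{p/N} \big(\int_\Omega |u|^{p^\ast}\, dx\big)^{p/p^\ast}$, and combining this with the Sobolev inequality \eqref{1.4} yields $\int_\Omega |\nabla u|^p\, dx \ge S\, \vol{\Omega}^{-p/N} \int_\Omega |u|^p\, dx$, whence $\lambda_1(\Omega) \ge S\, \vol{\Omega}^{-p/N}$. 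Since $1 - 2^{-p/N} < 1$, assumption \eqref{1.7} forces $\lambda < S\, \vol{\Omega}^{-p/N} \le \lambda_1(\Omega)$, so \eqref{1.2} holds and Theorem \ref{Theorem 3.1} applies, producing the desired $N$ pairs of nontrivial solutions. I expect no serious obstacle beyond correctly matching the normalization constants; the only genuine computation is the eigenvalue bound just described.
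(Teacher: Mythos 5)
Your proposal is correct and is essentially the paper's own argument: the paper likewise specializes Theorem \ref{Theorem 3.1} to $a \equiv \lambda$ and verifies the coercivity condition \eqref{3.2} by exactly your H\"{o}lder--Sobolev chain, writing it directly as $\int_\Omega \big(|\nabla u|^p - \lambda |u|^p\big)\, dx \ge 1 - \lambda\, S^{-1} \vol{\Omega}^{p/N} > 2^{-p/N}$ for $\pnorm{\nabla u} = 1$, while the verifications of \eqref{3.3} and of $\spnorm[N/p]{a^+} = \lambda \vol{\Omega}^{p/N} < \big(1 - 2^{-p/N}\big) S$ are left implicit just as you note. Your repackaging of the same inequality through the first Dirichlet eigenvalue $\lambda_1(\Omega) \ge S \vol{\Omega}^{-p/N}$ is a purely cosmetic difference.
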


See Devillanova and Solimini \cite{MR1966256} for the semilinear case $p = 2$. For $N > p^2 + p$, Cao et al.\! \cite{MR2885967} have recently shown that problem \eqref{1.6} has infinitely many solutions for all $\lambda > 0$ (see also Wu and Huang \cite{MR3072257} and Perera et al.\! \cite{PeSqYa1}). The multiplicity result in Corollary \ref{Corollary 1.2} is new when $p \ne 2$ and $p^2 \le N \le p^2 + p$.

From a technical point of view, we first use a recent global compactness result of Mercuri and Willen \cite{MR2644751} to prove that suitable Palais-Smale sequences of the energy functional associated with problem (\ref{1.1}) are weakly compact; see Lemma \ref{Lemma 2.2}. Estimates of critical levels arising from Krasnoselskii's genus are then provided; see Lemmas \ref{Lemma 2.3}--\ref{Lemma 2.6}. In both cases, the so called Talenti's functions \cite{MR0463908} play a basic role.

\section{Proof of Theorem \ref{Theorem 1.1}}

Weak solutions of problem \eqref{1.1} coincide with critical points of the $C^1$-functional
\[
\Phi(u) = \int_{\R^N} \left[\frac{1}{p}\, \big(|\nabla u|^p - a(x)\, |u|^p\big) - \frac{1}{p^\ast}\, |u|^{p^\ast}\right] dx, \quad u \in \D^{1,p}(\R^N)
\]
restricted to the Nehari manifold
\[
\M = \set{u \in \D^{1,p}(\R^N) \setminus \set{0} : \int_{\R^N} \big(|\nabla u|^p - a(x)\, |u|^p\big)\, dx = \int_{\R^N} |u|^{p^\ast}\, dx}.
\]
We note that $\M$ is bounded away from the origin in $\D^{1,p}(\R^N)$ in view of \eqref{1.2} and the continuity of the Sobolev imbedding $\D^{1,p}(\R^N) \hookrightarrow L^{p^\ast}(\R^N)$.

Recall that for $c \in \R$, $\seq{u_j} \subset \M$ is a \PS{c} sequence for $\Phi$ (resp.\! $\restr{\Phi}{\M}$) if $\Phi'(u_j) \to 0$ (resp.\! $\restr{\Phi}{\M}'(u_j) \to 0$) and $\Phi(u_j) \to c$.

\begin{lemma} \label{Lemma 2.1}
If $\seq{u_j} \subset \M$ is a {\em \PS{c}} sequence for $\restr{\Phi}{\M}$, then $\seq{u_j}$ is also a {\em \PS{c}} sequence for $\Phi$.
\end{lemma}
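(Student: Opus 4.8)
The plan is to handle $\restr{\Phi}{\M}$ through the Lagrange multiplier rule. Writing
\[
G(u) = \int_{\R^N} \big(|\nabla u|^p - a(x)\, |u|^p\big)\, dx - \int_{\R^N} |u|^{p^\ast}\, dx,
\]
we have $\M = \set{u \in \D^{1,p}(\R^N) \setminus \set{0} : G(u) = 0}$, with $G \in C^1$. Recall the standard fact that for a $C^1$-constraint the norm of the restricted differential satisfies $\norm{\restr{\Phi}{\M}'(u)} = \inf_{\mu \in \R}\, \norm{\Phi'(u) - \mu\, G'(u)}$ in the dual of $\D^{1,p}(\R^N)$. Hence, if $\seq{u_j} \subset \M$ is a \PS{c} sequence for $\restr{\Phi}{\M}$, there exist reals $\mu_j$ with
\[
\Phi'(u_j) - \mu_j\, G'(u_j) \to 0
\]
in the dual. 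It therefore suffices to show that $\mu_j \to 0$ and that $\seq{G'(u_j)}$ is bounded, for then $\mu_j\, G'(u_j) \to 0$, and hence $\Phi'(u_j) \to 0$, while $\Phi(u_j) \to c$ holds by hypothesis.

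To estimate $\mu_j$ I would test the displayed relation with $u_j$ itself. Since $u_j \in \M$, the Nehari constraint gives $\langle \Phi'(u_j), u_j \rangle = \int_{\R^N} \big(|\nabla u_j|^p - a(x)\, |u_j|^p - |u_j|^{p^\ast}\big)\, dx = 0$, whereas a direct computation yields $\langle G'(u_j), u_j \rangle = (p - p^\ast)\, t_j$, where $t_j := \int_{\R^N} |u_j|^{p^\ast}\, dx$. In particular $G'(u_j) \ne 0$, so $\M$ is a genuine $C^1$-manifold near $u_j$ and the multiplier rule applies. Combining these,
\[
(p^\ast - p)\, \mu_j\, t_j = \o\big(\pnorm{\nabla u_j}\big).
\]
The heart of the matter is then to bound $t_j$ from below and $\pnorm{\nabla u_j}$ from above.

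Both bounds come from \eqref{1.2} together with the fact, noted after the definition of $\M$, that $\M$ is bounded away from the origin. Evaluating $\Phi$ on $\M$ gives $\Phi(u_j) = (1/p - 1/p^\ast)\, t_j$, so $t_j$ converges (to $c/(1/p - 1/p^\ast)$) and is bounded; by \eqref{1.2} one has $t_j = \int_{\R^N} \big(|\nabla u_j|^p - a(x)\, |u_j|^p\big)\, dx \ge \gamma\, \pnorm{\nabla u_j}^p$ for some $\gamma > 0$, whence $\seq{u_j}$ is bounded in $\D^{1,p}(\R^N)$, and $t_j \ge \gamma\, \rho^p > 0$, where $\rho > 0$ is a lower bound for $\pnorm{\nabla u_j}$ on $\M$. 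Thus $t_j$ stays bounded away from $0$ while $\pnorm{\nabla u_j}$ stays bounded, so the last identity forces $\mu_j \to 0$; and boundedness of $\seq{u_j}$ together with $a \in L^{N/p}(\R^N)$ and H\"older's inequality bounds $\seq{G'(u_j)}$ in the dual, completing the argument. The main obstacle is precisely the lower bound $t_j \ge \gamma\, \rho^p$: it is the coercivity encoded in \eqref{1.2} and the separation of $\M$ from the origin that prevent the multipliers from persisting, and without them the restricted and unrestricted Palais-Smale conditions would genuinely differ.
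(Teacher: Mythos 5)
Your proposal is correct and follows essentially the same route as the paper: extract Lagrange multipliers $\mu_j$, test the resulting relation with $u_j$ so that the Nehari constraint kills the $\Phi'$ term, and use \eqref{1.2} together with the fact that $\M$ is bounded away from the origin to force $\mu_j \to 0$, whence $\Phi'(u_j) \to 0$. The only differences are cosmetic — you phrase the key quantity as $t_j = \int_{\R^N} |u_j|^{p^\ast}\, dx$ rather than $\int_{\R^N} \big(|\nabla u_j|^p - a(x)\,|u_j|^p\big)\, dx$ (equal on $\M$), and you spell out the dual-norm boundedness of $G'(u_j)$, which the paper leaves implicit.
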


\begin{proof}
Since $u_j \in \M$,
\[
\Phi(u_j) = \frac{1}{N} \int_{\R^N} \big(|\nabla u_j|^p - a(x)\, |u_j|^p\big)\, dx,
\]
and since $\seq{\Phi(u_j)}$ is bounded and \eqref{1.2} holds, this implies that $\seq{u_j}$ is bounded in $\D^{1,p}(\R^N)$. Since $\restr{\Phi}{\M}'(u_j) \to 0$, for some sequence of Lagrange multipliers $\seq{\mu_j} \subset \R$,
\begin{multline} \label{2.1}
\int_{\R^N} \big(|\nabla u_j|^{p-2}\, \nabla u_j \cdot \nabla v - a(x)\, |u_j|^{p-2}\, u_j\, v - |u_j|^{p^\ast - 2}\, u_j\, v\big)\, dx\\[10pt]
- \mu_j \int_{\R^N} \Big[p\, \big(|\nabla u_j|^{p-2}\, \nabla u_j \cdot \nabla v - a(x)\, |u_j|^{p-2}\, u_j\, v\big) - p^\ast\, |u_j|^{p^\ast - 2}\, u_j\, v\Big]\, dx\\[10pt]
= \o(\norm{v}) \quad \forall v \in \D^{1,p}(\R^N).
\end{multline}
Taking $v = u_j$ and using the fact that $\seq{u_j}$ is a bounded sequence in $\M$ shows that
\[
\mu_j \int_{\R^N} \big(|\nabla u_j|^p - a(x)\, |u_j|^p\big)\, dx \to 0.
\]
Since \eqref{1.2} holds and $\M$ is bounded away from the origin, this implies that $\mu_j \to 0$, so $\Phi'(u_j) \to 0$ by \eqref{2.1}.
\end{proof}

In the absence of a compact Sobolev imbedding, we will use the following compactness type property of \PS{c} sequences for $\restr{\Phi}{\M}$.

\begin{lemma} \label{Lemma 2.2}
If
\begin{equation} \label{2.2}
\frac{1}{N}\, S^{N/p} < c < \frac{2}{N}\, S^{N/p},
\end{equation}
then every {\em \PS{c}} sequence for $\restr{\Phi}{\M}$ has a subsequence that converges weakly to a (nontrivial) solution $v_0$ of problem \eqref{1.1} with $\Phi(v_0) = c$ or $\Phi(v_0) = c - \dfrac{1}{N}\, S^{N/p}$.
\end{lemma}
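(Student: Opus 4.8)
The plan is to derive the statement by feeding a bounded Palais--Smale sequence into the global compactness theorem of Mercuri and Willem. I would first apply Lemma~\ref{Lemma 2.1}, so that the given \PS{c} sequence $\seq{u_j}$ for $\restr{\Phi}{\M}$ is also a \PS{c} sequence for the unconstrained functional $\Phi$; the computation in that proof simultaneously shows $\seq{u_j}$ is bounded in $\D^{1,p}(\R^N)$. The task thus reduces to analysing a bounded \PS{c} sequence for $\Phi$ on $\D^{1,p}(\R^N)$, which is exactly the setting of the Mercuri--Willem result.

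Passing to a subsequence, that theorem produces a weak limit $u_j \rightharpoonup v_0$ with $v_0$ a (possibly zero) solution of \eqref{1.1}, finitely many bubbles---translated and rescaled nontrivial solutions $U_1, \dots, U_k$ of the limiting equation $-\Delta_p\, U = |U|^{p^\ast - 2}\, U$ on $\R^N$---and the energy identity
\[
c = \Phi(v_0) + \sum_{i=1}^k \Phi_\infty(U_i),
\]
where $\Phi_\infty$ is the functional of the limiting equation. Each bubble carries energy at least $\frac{1}{N}\, S^{N/p}$: by the Nehari identity $\int_{\R^N} |\nabla U_i|^p\, dx = \int_{\R^N} |U_i|^{p^\ast}\, dx$, so $\Phi_\infty(U_i) = \frac{1}{N} \int_{\R^N} |\nabla U_i|^p\, dx$, and \eqref{1.4} forces $\int_{\R^N} |\nabla U_i|^p\, dx \ge S^{N/p}$, with equality precisely for Talenti's extremals.

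It then remains to count energies. First, $\Phi(v_0) \ge 0$: when $v_0 \ne 0$ it lies on $\M$, so $\Phi(v_0) = \frac{1}{N} \int_{\R^N} \big(|\nabla v_0|^p - a(x)\, |v_0|^p\big)\, dx > 0$ by \eqref{1.2}, while $\Phi(0) = 0$. Combined with the per-bubble bound this gives $c \ge \frac{k}{N}\, S^{N/p}$, and the upper estimate in \eqref{2.2} forces $k \le 1$. If $k = 0$, then $\Phi(v_0) = c > \frac{1}{N}\, S^{N/p} > 0$, so $v_0 \ne 0$ and $\Phi(v_0) = c$. If $k = 1$, then $\Phi(v_0) = c - \Phi_\infty(U_1)$ with $\Phi_\infty(U_1) \ge \frac{1}{N}\, S^{N/p}$; granting $\Phi_\infty(U_1) = \frac{1}{N}\, S^{N/p}$ (see below), the lower estimate in \eqref{2.2} yields $\Phi(v_0) = c - \frac{1}{N}\, S^{N/p} > 0$, so again $v_0 \ne 0$. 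In either case $v_0$ is the asserted nontrivial solution.

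I expect the real difficulty to be concentrated in the Mercuri--Willem step and in justifying the exactness invoked above. One must verify the hypotheses for the unconstrained functional $\Phi$---boundedness and the \PS{c} property, with $a \in L^{N/p}(\R^N)$ ensuring that the potential term does not interfere with the bubbles at their concentration scales---and, most delicately, that a single bubble splitting off below the level $\frac{2}{N}\, S^{N/p}$ must be a ground state of the limiting problem, hence a Talenti function of energy exactly $\frac{1}{N}\, S^{N/p}$. It is this exactness, not merely the lower bound $\frac{1}{N}\, S^{N/p}$, that converts the one-bubble case into the clean identity $\Phi(v_0) = c - \frac{1}{N}\, S^{N/p}$; once it is in hand, ruling out $k \ge 2$ and producing a nontrivial weak limit follows immediately from the energy window \eqref{2.2}.
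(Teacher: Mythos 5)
Your skeleton matches the paper's proof: Lemma \ref{Lemma 2.1} gives boundedness and the unconstrained \PS{c} property, the Mercuri--Willem decomposition gives the energy identity, the Nehari/Sobolev bound $\Phi_\infty(U_i) \ge \frac{1}{N}\, S^{N/p}$ per bubble together with $\Phi(v_0) \ge 0$ and the upper bound in \eqref{2.2} forces $k \le 1$, and the energy window gives nontriviality of $v_0$ in both cases. All of that is correct, and your direct exclusion of $k \ge 2$ is in fact tidier than the paper's, which only counts bubbles after upgrading each one to a ground state.

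The genuine gap is precisely the step you flag and then ``grant'': that in the case $k = 1$ the bubble has energy \emph{exactly} $\frac{1}{N}\, S^{N/p}$. Your stated reason --- a bubble splitting off below the level $\frac{2}{N}\, S^{N/p}$ ``must be a ground state of the limiting problem'' --- is not a consequence of the Sobolev lower bound, and you supply no mechanism for it; a priori the limit equation $-\Delta_p\, u = |u|^{p^\ast-2}\, u$ could have a solution with energy strictly between $\frac{1}{N}\, S^{N/p}$ and $\frac{2}{N}\, S^{N/p}$, in which case one could only conclude $\Phi(v_0) \in \bigl[0,\, c - \frac{1}{N}\, S^{N/p}\bigr]$, not the dichotomy asserted in the lemma. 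The paper closes this with two inputs you never invoke. First, sign-changing solutions of the limit equation satisfy $\Phi_\infty \ge \frac{2}{N}\, S^{N/p}$ (Mercuri--Sciunzi--Squassina; this is elementary, by running your own Nehari/Sobolev argument on $u^+$ and $u^-$ separately), so the energy window together with $\Phi(v_0) \ge 0$ forces the bubble to have constant sign, hence $|v_1| > 0$ by the strong maximum principle. Second --- and this is the deep ingredient --- Sciunzi's classification theorem (Caffarelli--Gidas--Spruck when $p = 2$; see also Damascelli et al.\ and V\'etois) states that every positive $\D^{1,p}(\R^N)$-solution of the critical $p$-Laplace equation is a Talenti function, so its energy equals $\frac{1}{N}\, S^{N/p}$ exactly. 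The first input you could have supplied yourself; the second cannot be derived from anything in your argument (equality in the Sobolev inequality characterizes Talenti functions, but that says nothing about solutions that are not Sobolev extremals), and without some such classification the exactness, and with it the lemma, does not follow. So the proposal correctly locates the difficulty but does not resolve it.
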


\begin{proof}
Assume \eqref{2.2} and let $\seq{u_j} \subset \M$ be a \PS{c} sequence for $\restr{\Phi}{\M}$. Then $\seq{u_j}$ is also a \PS{c} sequence for $\Phi$ by Lemma \ref{Lemma 2.1}, and by a recent global compactness result of Mercuri and Willem \cite[Theorem 5.2]{MR2644751} (see also Benci and Cerami \cite{MR1033915} and Alves \cite{MR1926623}), a renamed subsequence converges weakly to a solution $v_0 \in \D^{1,p}(\R^N)$ of problem \eqref{1.1} and there exist nontrivial solutions $v_1,\dots,v_k \in \D^{1,p}(\R^N)$, $k \ge 0$, of $- \Delta_p\, u = |u|^{p^\ast - 2}\, u$ such that
\begin{equation} \label{2.3}
\Phi(v_0) + \sum_{i=1}^k \Phi_\infty(v_i) = c,
\end{equation}
where
\[
\Phi_\infty(u) = \int_{\R^N} \left(\frac{1}{p}\, |\nabla u|^p - \frac{1}{p^\ast}\, |u|^{p^\ast}\right) dx, \quad u \in \D^{1,p}(\R^N).
\]
If $k = 0$, then $\Phi(v_0) = c$ by \eqref{2.3} and we are done, so suppose $k \ge 1$. We have
\begin{equation} \label{2.4}
\Phi(v_0) = \frac{1}{N} \int_{\R^N} |v_0|^{p^\ast}\, dx \ge 0, \qquad \Phi_\infty(v_i) = \frac{1}{N} \int_{\R^N} |v_i|^{p^\ast}\, dx > 0, \quad i = 1,\dots,k.
\end{equation}
For $i = 1,\dots,k$, if $v_i$ is sign-changing, then $\Phi_\infty(v_i) \ge \dfrac{2}{N}\, S^{N/p}$ (see, e.g., Mercuri et al.\! \cite[Lemma 2.1]{MR3250483}), so $|v_i| > 0$ by \eqref{2.2}--\eqref{2.4} and the strong maximum principle. Hence
\[
\Phi_\infty(v_i) = \frac{1}{N}\, S^{N/p}, \quad k = 1,\dots,k
\]
by Sciunzi \cite[Theorem 1.1]{Sc} (see also Caffarelli et al.\! \cite{MR982351}, Damascelli et al.\! \cite{MR3255462}, and V{\'e}tois \cite{MR3411668}), and then it follows from \eqref{2.2}--\eqref{2.4} again that $k = 1$ and $\Phi(v_0) = c - \dfrac{1}{N}\, S^{N/p}$.
\end{proof}

Let $\A$ denote the class of all nonempty compact symmetric subsets of $\M$, let
\[
\gamma(A) = \inf\, \{k \ge 1 : \exists \text{ an odd continuous map } A \to \R^k \setminus \{0\}\}
\]
be the Krasnoselskii genus of $A \in \A$, let
\[
\A_k = \set{A \in \A : \gamma(A) \ge k + 1},
\]
and set
\[
c_k := \inf_{A \in \A_k}\, \max_{u \in A}\, \Phi(u), \quad k = 0,\dots,N.
\]
We have
\begin{equation} \label{2.5}
c_0 = \inf_{u \in \M}\, \Phi(u)
\end{equation}
and
\[
c_0 \le c_1 \le \cdots \le c_N,
\]
and we will show that $\dfrac{1}{N}\, S^{N/p} < c_1$ and $c_N < \dfrac{2}{N}\, S^{N/p}$ in order to apply Lemma \ref{Lemma 2.2}.

\begin{lemma} \label{Lemma 2.3}
Every $A \in \A_1$ contains a point $u_0$ such that $u_0^\pm \in \M$, and hence $c_1 \ge 2 c_0$.
\end{lemma}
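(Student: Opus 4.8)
The plan is to produce the desired point as a zero of a single odd real-valued function and to invoke the fact that $\gamma(A)\ge 2$ forbids odd maps of $A$ into $\R\setminus\set{0}$. Set
\[
f^+(u) = \int_{\R^N}\big(|\nabla u^+|^p - a(x)\,|u^+|^p\big)\,dx - \int_{\R^N}|u^+|^{p^\ast}\,dx,
\]
and let $f^-$ be the analogous quantity built from $u^-$. First I would record the two structural facts that make $f^+$ the right object. Since $u^+$ and $u^-$ have disjoint supports, each of $\int|\nabla\cdot|^p$, $\int a\,|\cdot|^p$, $\int|\cdot|^{p^\ast}$ splits additively, so $f^+(u)+f^-(u)=\int\big(|\nabla u|^p-a\,|u|^p\big)-\int|u|^{p^\ast}=0$ for every $u\in\M$; and because $(-u)^+=u^-$ we get $f^+(-u)=f^-(u)=-f^+(u)$ on $\M$, so $f^+$ is odd and continuous on $\M$. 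Moreover, if $u\in\M$ is sign-changing with $f^+(u)=0$, then $f^-(u)=0$ as well, and hence $u^+,u^-\in\M$. Thus it suffices to produce, in any $A\in\A_1$, a \emph{sign-changing} $u_0$ with $f^+(u_0)=0$.

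The main obstacle is that $f^+$ vanishes identically on the non-sign-changing part of $\M$: if $u\ge 0$ then $f^+(u)=0$ since $u\in\M$, while if $u\le 0$ then $u^+=0$. Hence the crude principle ``an odd continuous real function on a set of genus $\ge 2$ must vanish somewhere'' only yields a possibly trivial zero. To exclude this I would prove a quantitative boundary estimate. Using the coercivity in \eqref{1.2}, which by homogeneity gives $\int\big(|\nabla v|^p-a\,|v|^p\big)\ge\mu_0\int|\nabla v|^p$ with $\mu_0>0$ the infimum in \eqref{1.2}, together with the Sobolev bound $\int|v|^{p^\ast}\le S^{-p^\ast/p}\big(\int|\nabla v|^p\big)^{p^\ast/p}$, one obtains $f^+(u)\ge s\big(\mu_0-S^{-p^\ast/p}s^{\,p^\ast/p-1}\big)$ with $s=\int|\nabla u^+|^p$. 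As $p^\ast/p>1$, there is $\rho_0>0$ (also taken small enough that $\M$ being bounded away from the origin keeps positive functions out of the region $s<\rho_0$) so that $0<\int|\nabla u^+|^p<\rho_0$ forces $f^+(u)>0$, and symmetrically $0<\int|\nabla u^-|^p<\rho_0$ forces $f^+(u)<0$. In words: sign-changing functions with a very small positive (resp.\ negative) part lie strictly on the positive (resp.\ negative) side of $f^+$.

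I would then argue by contradiction. Suppose no sign-changing $u\in A$ satisfies $f^+(u)=0$. Put $A^+=\set{u\in A:u\ge 0}$ and $A^-=\set{u\in A:u\le 0}$, which are closed and disjoint, and partition $A=U\sqcup V$ with $U=\set{u\in A:f^+(u)>0}\cup A^-$ and $V=\set{u\in A:f^+(u)<0}\cup A^+$. Under the contradiction hypothesis every sign-changing point lies in exactly one of $\set{f^+>0}$, $\set{f^+<0}$, while every other point lies in $A^+$ or $A^-$; oddness gives $V=-U$. The boundary estimate is exactly what makes $U$ and $V$ \emph{open}: $\set{u\in A:\int|\nabla u^+|^p<\rho_0}$ is an open neighborhood of $A^-$ contained in $A^-\cup\set{f^+>0}=U$, so $A^-$ is interior to $U$, and together with the open set $\set{f^+>0}$ this shows $U$ is open; likewise $V$. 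Then $u\mapsto+1$ on $U$ and $-1$ on $V$ is an odd continuous map $A\to\set{\pm1}\subset\R\setminus\set{0}$, forcing $\gamma(A)\le 1$ and contradicting $A\in\A_1$. Hence a sign-changing $u_0\in A$ with $u_0^\pm\in\M$ exists.

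Finally, for $c_1\ge 2c_0$: additivity over disjoint supports gives $\Phi(u_0)=\Phi(u_0^+)+\Phi(u_0^-)$, and since $u_0^\pm\in\M$ each summand is at least $c_0$, so $\Phi(u_0)\ge 2c_0$. As $u_0\in A$ this yields $\max_{u\in A}\Phi(u)\ge 2c_0$ for every $A\in\A_1$, and taking the infimum over $\A_1$ gives $c_1\ge 2c_0$. I expect the delicate step to be the openness of $U$ and $V$ across the interface with $A^\pm$, which is precisely where the coercivity from \eqref{1.2} is indispensable; everything else is bookkeeping with the Nehari identity and the genus.
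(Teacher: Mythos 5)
Your proof is correct, and its skeleton is the same as the paper's: the paper sets $\alpha(u)=\int_{\R^N}\big(|\nabla u|^p-a(x)\,|u|^p-|u|^{p^\ast}\big)\,dx$ and applies the genus obstruction to the odd map $u\mapsto\alpha(u^+)-\alpha(u^-)$ (which on $\M$ is exactly $2f^+(u)$ in your notation) to obtain $u_0\in A$ with $\alpha(u_0^+)=\alpha(u_0^-)$, hence $\alpha(u_0^\pm)=0$, and then concludes $u_0^\pm\in\M$. The genuine difference lies in the degenerate case you flagged and repaired: the paper's two-line argument does not rule out that the balanced point $u_0$ it produces is one-signed, in which case one of $u_0^\pm$ is $0\notin\M$ and the conclusion fails --- indeed every one-signed element of $A$ is automatically a zero of the paper's map, since for such $u$ both $\alpha(u^+)$ and $\alpha(u^-)$ vanish ($u\in\M$ and $\alpha(0)=0$). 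Your coercivity-plus-Sobolev estimate $f^+(u)\ge s\big(\mu_0-S^{-p^\ast/p}s^{p^\ast/p-1}\big)$, $s=\int_{\R^N}|\nabla u^+|^p\,dx$, combined with the fact that $\M$ is bounded away from the origin, is precisely what excludes this scenario: it makes $U=\set{f^+>0}\cup A^-$ and $V=\set{f^+<0}\cup A^+$ relatively open, so that the absence of a sign-changing zero of $f^+$ would yield an odd locally constant map $A\to\set{\pm 1}$ and $\gamma(A)\le 1$. So your route buys completeness: it proves the lemma exactly as stated, while the paper's written proof silently assumes that the zero it finds is sign-changing. (Both arguments also tacitly use the continuity of $u\mapsto u^\pm$ in $\D^{1,p}(\R^N)$, which is standard.) The final step --- $\Phi(u_0)=\Phi(u_0^+)+\Phi(u_0^-)\ge 2c_0$ by disjointness of supports and $u_0^\pm\in\M$, then taking the infimum over $\A_1$ --- is identical in both proofs.
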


\begin{proof}
Let $A \in \A_1$ and set $\alpha(u) = \dint_{\R^N} \big(|\nabla u|^p - a(x)\, |u|^p - |u|^{p^\ast}\big)\, dx$. Then $A$ contains a point $u_0$ with $\alpha(u_0^+) = \alpha(u_0^-)$, for otherwise
\[
A \to \R \setminus \{0\}, \quad u \mapsto \frac{\alpha(u^+) - \alpha(u^-)}{|\alpha(u^+) - \alpha(u^-)|}
\]
is an odd continuous map and hence $\gamma(A) \le 1$. We also have $\alpha(u_0^+) + \alpha(u_0^-) = \alpha(u_0) = 0$ since $u_0 \in \M$, so this implies that $\alpha(u_0^\pm) = 0$ and hence $u_0^\pm \in \M$. Then
\[
\max_{u \in A}\, \Phi(u) \ge \Phi(u_0) = \Phi(u_0^+) + \Phi(u_0^-) \ge 2 c_0
\]
by \eqref{2.5}, so the second assertion follows.
\end{proof}

We have the following lower bound for $c_0$.

\begin{lemma} \label{Lemma 2.4}
If $\spnorm[N/p]{a^+} < S$, then
\[
c_0 \ge \frac{1}{N}\, \big(S - \spnorm[N/p]{a^+}\big)^{N/p}.
\]
\end{lemma}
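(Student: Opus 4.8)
The plan is to exploit the Nehari identity to rewrite $\Phi$ on $\M$ purely in terms of the critical term, and then bound that term below by combining H\"older's and Sobolev's inequalities. First I would recall that, exactly as in the proof of Lemma \ref{Lemma 2.1}, every $u \in \M$ satisfies
\[
\Phi(u) = \frac{1}{N} \int_{\R^N} \big(|\nabla u|^p - a(x)\, |u|^p\big)\, dx = \frac{1}{N} \int_{\R^N} |u|^{p^\ast}\, dx,
\]
the second equality being the defining constraint of $\M$. Consequently $c_0 = \dfrac{1}{N}\, \inf_{u \in \M}\, \dint_{\R^N} |u|^{p^\ast}\, dx$, and it suffices to obtain a uniform lower bound for $\dint_{\R^N} |u|^{p^\ast}\, dx$ over $u \in \M$.

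Next I would estimate the two pieces of $\dint_{\R^N} \big(|\nabla u|^p - a\, |u|^p\big)\, dx$ separately. Since $|u|^p \in L^{N/(N-p)}(\R^N)$ with $\big\||u|^p\big\|_{N/(N-p)} = \big(\dint_{\R^N} |u|^{p^\ast}\, dx\big)^{p/p^\ast}$, H\"older's inequality applied with the conjugate pair $\big(N/p,\, N/(N-p)\big)$ gives
\[
\int_{\R^N} a\, |u|^p\, dx \le \int_{\R^N} a^+\, |u|^p\, dx \le \spnorm[N/p]{a^+} \left(\int_{\R^N} |u|^{p^\ast}\, dx\right)^{p/p^\ast},
\]
while the definition \eqref{1.4} of $S$ yields $\dint_{\R^N} |\nabla u|^p\, dx \ge S \big(\dint_{\R^N} |u|^{p^\ast}\, dx\big)^{p/p^\ast}$. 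Writing $m = \dint_{\R^N} |u|^{p^\ast}\, dx$, which is strictly positive because $u \ne 0$, and inserting these two bounds into the Nehari identity above produces the single inequality $m \ge \big(S - \spnorm[N/p]{a^+}\big)\, m^{p/p^\ast}$.

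Finally, since the hypothesis $\spnorm[N/p]{a^+} < S$ makes the factor $S - \spnorm[N/p]{a^+}$ positive, I would divide through by $m^{p/p^\ast} > 0$ and use the elementary identity $1 - p/p^\ast = p/N$ to rewrite the inequality as $m^{p/N} \ge S - \spnorm[N/p]{a^+}$, that is, $m \ge \big(S - \spnorm[N/p]{a^+}\big)^{N/p}$. Multiplying by $1/N$ and taking the infimum over $u \in \M$ then gives the asserted bound on $c_0$. I do not expect a genuine obstacle here: the argument is a short chain of standard estimates, and the only point demanding care is the bookkeeping of the conjugate H\"older exponents together with the verification that the residual exponent $1 - p/p^\ast$ equals $p/N$, which is precisely what converts the inequality into the stated power $N/p$.
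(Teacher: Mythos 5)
Your proposal is correct and takes essentially the same route as the paper: both proofs combine the Nehari constraint defining $\M$ with the Sobolev inequality \eqref{1.4} and H\"older's inequality (with exponents $N/p$ and $N/(N-p)$) applied to $a^+$, arriving at $\big(S - \spnorm[N/p]{a^+}\big)\, m^{p/p^\ast} \le m$ for $m = \dint_{\R^N} |u|^{p^\ast}\, dx$, and then use $1 - p/p^\ast = p/N$ together with $\Phi(u) = \frac{1}{N}\, m$ on $\M$. The only difference is cosmetic: the paper chains the inequalities starting from $S\, m^{p/p^\ast} \le \dint_{\R^N} |\nabla u|^p\, dx$, while you rearrange the same estimates around the Nehari identity.
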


\begin{proof}
For all $u \in \M$,
\begin{multline*}
S \left(\int_{\R^N} |u|^{p^\ast}\, dx\right)^{p/p^\ast} \le \int_{\R^N} |\nabla u|^p\, dx = \int_{\R^N} \big(a(x)\, |u|^p + |u|^{p^\ast}\big)\, dx\\[10pt]
\le \int_{\R^N} \big(a^+(x)\, |u|^p + |u|^{p^\ast}\big)\, dx \le \spnorm[N/p]{a^+} \left(\int_{\R^N} |u|^{p^\ast}\, dx\right)^{p/p^\ast} + \int_{\R^N} |u|^{p^\ast}\, dx
\end{multline*}
by \eqref{1.4} and the H\"{o}lder inequality, so
\[
\int_{\R^N} |u|^{p^\ast}\, dx \ge \big(S - \spnorm[N/p]{a^+}\big)^{N/p}.
\]
Since
\[
\Phi(u) = \frac{1}{N} \int_{\R^N} |u|^{p^\ast}\, dx,
\]
the assertion follows.
\end{proof}

\begin{lemma} \label{Lemma 2.5}
$c_1 > \dfrac{1}{N}\, S^{N/p}$
\end{lemma}

\begin{proof}
By Lemmas \ref{Lemma 2.3} and \ref{Lemma 2.4}, and \eqref{1.5},
\[
c_1 \ge \frac{2}{N}\, \big(S - \spnorm[N/p]{a^+}\big)^{N/p} > \frac{1}{N}\, S^{N/p}. \QED
\]
\end{proof}

\begin{lemma} \label{Lemma 2.6}
$c_N < \dfrac{2}{N}\, S^{N/p}$
\end{lemma}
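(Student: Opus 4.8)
The plan is to produce, for each small $\eps > 0$, a single odd continuous map $h_\eps : S^N \to \M$, where $S^N$ is the unit sphere of $\R^{N+1}$, whose image carries energy strictly below $\frac{2}{N}\, S^{N/p}$. Since $\gamma(S^N) = N+1$ and the genus cannot decrease under an odd continuous map, the compact symmetric set $h_\eps(S^N) \subset \M$ satisfies $\gamma(h_\eps(S^N)) \ge N+1$, so $h_\eps(S^N) \in \A_N$ and hence $c_N \le \max_{t \in S^N} \Phi(h_\eps(t))$. It therefore suffices to construct one such $h_\eps$ with $\max_{S^N} \Phi \circ h_\eps < \frac{2}{N}\, S^{N/p}$.

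To build $h_\eps$ I would start from the Talenti extremals $U$ of \eqref{1.4}, normalized so that $\int_{\R^N} |\nabla U|^p\, dx = \int_{\R^N} U^{p^\ast}\, dx = S^{N/p}$, and their rescaled translates $U_{\eps,y}$ concentrating at points $y$ near the center $x_0$ of the ball in \eqref{1.3}. Fixing $N+1$ linearly independent bumps $w_1,\dots,w_{N+1}$ with centers in $B_\delta(x_0)$, I define $h_\eps(t)$, for $t = (t_1,\dots,t_{N+1}) \in S^N$, to be the unique positive multiple of $\sum_i t_i\, w_i$ lying on $\M$. Condition \eqref{1.2} makes the form $u \mapsto \int_{\R^N} (|\nabla u|^p - a(x)\, |u|^p)\, dx$ coercive, so this Nehari projection is well defined and continuous, and it is odd because $\Phi$ is even and $\M$ is symmetric. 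On $\M$ we have $\Phi = \frac{1}{N} \int_{\R^N} |u|^{p^\ast}\, dx$, and the Nehari value of $\sum_i t_i\, w_i$ equals $\frac{1}{N}\, (Q_\eps(t))^{N/p}$, where $Q_\eps(t)$ is the Sobolev-type quotient with numerator $\int_{\R^N} (|\nabla \sum_i t_i w_i|^p - a(x)\, |\sum_i t_i w_i|^p)\, dx$ and denominator $(\int_{\R^N} |\sum_i t_i w_i|^{p^\ast}\, dx)^{p/p^\ast}$. The whole problem thus reduces to the uniform estimate $\sup_{t \in S^N} Q_\eps(t) < 2^{p/N}\, S$ for $\eps$ small, after which $c_N \le \frac{1}{N}\, (\sup_t Q_\eps(t))^{N/p} < \frac{2}{N}\, S^{N/p}$.

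The main obstacle is precisely this uniform quotient estimate. At the vertices $t = \pm e_i$, $Q_\eps(t)$ is the quotient of a single concentrating bump, which the Brezis--Nirenberg-type computation forces strictly below $S$ thanks to $\essinf_{B_\delta(x_0)}\, a > 0$ from \eqref{1.3}; this is exactly where $N \ge p^2$ is needed, since it guarantees that the favorable lower-order contribution $-\int_{\R^N} a(x)\, |U_{\eps,y}|^p\, dx \sim -c\, \eps^p$ (with a logarithmic correction when $N = p^2$) dominates the truncation remainder $O(\eps^{(N-p)/(p-1)})$ in the Sobolev ratio, the comparison $p < (N-p)/(p-1)$ being equivalent to $N > p^2$. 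The delicate regime is where $t$ has mixed signs and the combination nearly cancels, behaving like a sign-changing function whose quotient can approach the critical value $2^{p/N}\, S$; here I would place the centers at a scale tied to $\eps$ so that every admissible combination remains single-bump-like, and again use the strictly negative $a$-term, uniformly in $t$, to keep $Q_\eps(t)$ below the threshold. Controlling the bump interactions and truncation errors uniformly over the compact parameter set $S^N$, so that this strict gap persists after taking the supremum, is the crux of the argument.
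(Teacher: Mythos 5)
Your overall framing coincides with the paper's: produce an odd continuous map of $S^N$ into $\M$ whose image has energy strictly below $\frac{2}{N}\,S^{N/p}$, then use monotonicity of the genus to bound $c_N$; and your single-bump asymptotics (truncated Talenti functions, with $N \ge p^2$ entering through the competition between the $\eps^p$-term from \eqref{1.3} and the $\eps^{(N-p)/(p-1)}$ truncation error) also match Lemmas~\ref{Lemma 2.4}--\ref{Lemma 2.6}. The gap is in the construction of the map itself. You take the Nehari projection of the linear span of $N+1$ \emph{fixed} bumps, $h_\eps(t) = \pi\big(\sum_i t_i w_i\big)$, and the uniform estimate $\sup_{t \in S^N} Q_\eps(t) < 2^{p/N} S$, which you rightly call the crux, is in fact \emph{false} for this construction. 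If the $w_i$ have pairwise disjoint supports, then all three integrals decouple and
\[
Q_\eps(t) \;=\; \frac{\sum_i |t_i|^p}{\Big(\sum_i |t_i|^{p^\ast}\Big)^{p/p^\ast}} \cdot \frac{\dint_{\R^N} \big(|\nabla \tilde{u}_\eps|^p - a(x)\, \tilde{u}_\eps^p\big)\, dx}{\left(\dint_{\R^N} \tilde{u}_\eps^{p^\ast}\, dx\right)^{p/p^\ast}},
\]
and the coefficient factor, which is homogeneous of degree zero, has maximum $(N+1)^{1 - p/p^\ast} = (N+1)^{p/N}$, attained when all $|t_i|$ are equal. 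At that point the Nehari energy is approximately $\frac{N+1}{N}\, S^{N/p}$, far above $\frac{2}{N}\, S^{N/p}$; the factor is only $\le 2^{p/N}$ when at most \emph{two} bumps carry mass. Your proposed remedy --- shrinking the distance between centers so every combination is ``single-bump-like'' --- trades this failure for another: mixed-sign combinations of overlapping bumps nearly cancel and produce concentrated sign-changing profiles, and for such profiles the quotient is not below $2^{p/N} S$ either (compare the energy-doubling fact $\Phi_\infty(v) \ge \frac{2}{N}\, S^{N/p}$ for sign-changing critical profiles, used in the proof of Lemma~\ref{Lemma 2.2}). No placement of $N+1$ fixed bumps resolves this tension.

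The paper's construction avoids it by never superposing more than two bumps: on the hemisphere $S^N_+$ it sets
\[
\varphi(x) = \pi\big(\tilde{u}_\eps(\cdot - (1 - (2t - 1)_+)\, x'/2) - (1 - 2t)_+\, \tilde{u}_\eps(\cdot + x'/2)\big), \quad x = (x'\sqrt{1-t^2},\, t),
\]
where the two truncated bumps have disjoint supports inside the ball from \eqref{1.3}. The separation \emph{direction} $x' \in S^{N-1}$ supplies $N-1$ of the parameter dimensions and $t \in [0,1]$ the last, so the topology (genus $N+1$ after the odd extension to $S^N$, which exists because $\varphi$ is odd on the equator $t = 0$) is generated by \emph{translating} the bumps rather than by taking linear combinations of many of them. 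With only two disjoint bumps and relative amplitude $s = (1-2t)_+ \in [0,1]$, the coefficient factor is $\frac{1 + s^p}{(1 + s^{p^\ast})^{p/p^\ast}} \le 2^{p/N}$, and the strict single-bump gap $< S$ then yields $\max \Phi \circ \tilde{\varphi} < \frac{2}{N}\, S^{N/p}$. That translation-generated topology is the idea missing from your proposal.
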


\begin{proof}
The infimum in \eqref{1.4} is attained by the family of functions
\[
u_\eps(x) = \frac{C_{N,p}\, \eps^{-(N-p)/p}}{\left[1 + \left(\dfrac{|x|}{\eps}\right)^{p/(p-1)}\right]^{(N-p)/p}}, \quad \eps > 0,
\]
where $C_{N,p} > 0$ is chosen so that
\[
\int_{\R^N} |\nabla u_\eps|^p\, dx = \int_{\R^N} u_\eps^{p^\ast}\, dx = S^{N/p}.
\]
Take a smooth function $\eta : [0,\infty) \to [0,1]$ such that $\eta(s) = 1$ for $s \le 1/4$ and $\eta(s) = 0$ for $s \ge 1/2$, and set
\[
\tilde{u}_\eps(x) = \eta(|x|)\, u_\eps(x), \quad \eps > 0.
\]
We have the well-known estimates
\begin{gather}
\label{2.6} \int_{\R^N} |\nabla \tilde{u}_\eps|^p\, dx \le S^{N/p} + C \eps^{(N-p)/(p-1)},\\[10pt]
\int_{\R^N} \tilde{u}_\eps^{p^\ast}\, dx \ge S^{N/p} - C \eps^{N/(p-1)},\\[10pt]
\label{2.8} \int_{\R^N} \tilde{u}_\eps^p\, dx \ge \begin{cases}
\dfrac{\eps^p}{C} & \text{if } N > p^2\\[10pt]
\dfrac{\eps^p}{C}\; |\!\log \eps| & \text{if } N = p^2,
\end{cases}
\end{gather}
where $C = C(N,p) > 0$ is a constant (see, e.g., Degiovanni and Lancelotti \cite{MR2514055}).

After a translation and a dilation, we may assume that $x_0 = 0$ and $\delta = 1$ in \eqref{1.3}, so we have
\begin{equation} \label{2.9}
\lambda := \essinf_{x \in B_1(0)}\, a(x) > 0.
\end{equation}
Let $S^{N-1}$ be the unit sphere in $\R^N$, let
\[
S^N_+ = \bgset{x = (x' \sqrt{1 - t^2},t) : x' \in S^{N-1},\, t \in [0,1]}
\]
be the upper hemisphere in $R^{N+1}$, and consider the map $\varphi : S^N_+ \to \M$ defined by
\[
\varphi(x) = \pi\big(\tilde{u}_\eps(\cdot - (1 - (2t - 1)_+)\, x'/2) - (1 - 2t)_+\, \tilde{u}_\eps(\cdot + x'/2)\big),
\]
where
\[
\pi : \D^{1,p}(\R^N) \setminus \set{0} \to \M, \quad u \mapsto \left[\frac{\dint_{\R^N} \big(|\nabla u|^p - a(x)\, |u|^p\big)\, dx}{\dint_{\R^N} |u|^{p^\ast}\, dx}\right]^{N/p p^\ast} u
\]
is the radial projection onto $\M$. Clearly, $\varphi$ is continuous. Since
\[
\Phi(\pi(u)) = \frac{1}{N} \left[\frac{\dint_{\R^N} \big(|\nabla u|^p - a(x)\, |u|^p\big)\, dx}{\left(\dint_{\R^N} |u|^{p^\ast}\, dx\right)^{p/p^\ast}}\right]^{N/p}, \quad u \in \D^{1,p}(\R^N) \setminus \set{0},
\]
and $\tilde{u}_\eps(\cdot - (1 - (2t - 1)_+)\, x'/2)$ and $\tilde{u}_\eps(\cdot + x'/2)$ have disjoint supports in $B_1(0)$, where $a \ge \lambda$ a.e.\! by \eqref{2.9},
\[
\Phi(\varphi(x)) \le \frac{1}{N} \left[\frac{\left(1 + (1 - 2t)_+^p\right) \dint_{\R^N} \big(|\nabla \tilde{u}_\eps|^p - \lambda\, \tilde{u}_\eps^p\big)\, dx}{\big(1 + (1 - 2t)_+^{p^\ast}\big)^{p/p^\ast} \left(\dint_{\R^N} \tilde{u}_\eps^{p^\ast}\, dx\right)^{p/p^\ast}}\right]^{N/p} \quad \forall x \in S^N_+.
\]
The right-hand side is nonincreasing in $t$, and \eqref{2.6}--\eqref{2.8} give
\[
\frac{\dint_{\R^N} \big(|\nabla \tilde{u}_\eps|^p - \lambda\, \tilde{u}_\eps^p\big)\, dx}{\left(\dint_{\R^N} \tilde{u}_\eps^{p^\ast}\, dx\right)^{p/p^\ast}} \le \begin{cases}
S - \dfrac{\lambda \eps^p}{C} + C \eps^{(N-p)/(p-1)} & \text{if } N > p^2\\[10pt]
S - \dfrac{\lambda \eps^p}{C}\; |\!\log \eps| + C \eps^p & \text{if } N = p^2,
\end{cases}
\]
so
\[
\max_{x \in S^N_+}\, \Phi(\varphi(x)) < \frac{2}{N}\, S^{N/p}
\]
if $\eps$ is sufficiently small. Since $\varphi$ is odd on $S^{N-1}$, it can now be extended to an odd continuous map $\tilde{\varphi} : S^N \to \M$ satisfying
\[
\max_{u \in \tilde{\varphi}(S^N)}\, \Phi(u) < \frac{2}{N}\, S^{N/p}.
\]
Then
\[
\gamma(\tilde{\varphi}(S^N)) \ge \gamma(S^N) = N + 1
\]
and the assertion follows.
\end{proof}

The next lemma is due to Devillanova and Solimini \cite{MR1966256} when $p = 2$.

\begin{lemma} \label{Lemma 2.7}
If $c_k = c_{k+1}$ for some $k \in \set{1,...,N - 1}$, then $\Phi$ has infinitely many critical points at the level $c_k$ or $c_k - \dfrac{1}{N}\, S^{N/p}$.
\end{lemma}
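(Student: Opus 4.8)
The plan is to argue by contradiction, combining the Krasnoselskii genus with a deformation of the sublevel sets of $\restr{\Phi}{\M}$; the one departure from the classical scheme is that the lack of compactness must be confined, via Lemma \ref{Lemma 2.2}, to a region associated with concentration. Write $c := c_k = c_{k+1}$ and observe that
\[
\frac{1}{N}\, S^{N/p} < c_1 \le c \le c_N < \frac{2}{N}\, S^{N/p}
\]
by Lemmas \ref{Lemma 2.5} and \ref{Lemma 2.6}, so \eqref{2.2} holds at the level $c$. For $a \in \R$ put
\[
K_a = \set{u \in \M : \restr{\Phi}{\M}'(u) = 0,\ \Phi(u) = a},
\]
and recall from (the argument of) Lemma \ref{Lemma 2.1} that each element of $K_a$ is a critical point of $\Phi$. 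Thus it suffices to show that $K_c$ or $K_{c - \frac{1}{N} S^{N/p}}$ is infinite, and I would assume for contradiction that both are finite. Being finite, symmetric, and bounded away from $0$, each then has genus at most $1$.

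The core of the argument is a deformation lemma for $\restr{\Phi}{\M}$ near the level $c$. First I would use the global compactness result behind Lemma \ref{Lemma 2.2} to classify \PS{c} sequences: after passing to a subsequence, either $u_j \to u$ strongly with $u \in K_c$ (the bubble-free case), or $u_j = w + (\text{one rescaled Talenti bubble}) + \o(1)$ in $\D^{1,p}(\R^N)$ with $w \in W := K_{c - \frac{1}{N} S^{N/p}}$ (the one-bubble case). Since $W$ and $K_c$ are finite, I would fix small symmetric open neighborhoods $U \supset K_c$ and $V$ of the (non-compact) set of one-bubble configurations over $W$, chosen separated, i.e.\ $\closure U \cap \closure V = \emptyset$; this separation is available because the configurations in $V$ concentrate whereas the finitely many elements of $K_c$ do not. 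The decisive estimate is
\[
\inf \set{\norm{\restr{\Phi}{\M}'(u)} : u \in \M,\ \vol{\Phi(u) - c} \le \eps,\ u \notin U \cup V} > 0
\]
for some small $\eps > 0$: otherwise a \PS{c} sequence would avoid $U \cup V$, contradicting the dichotomy just described (a strongly convergent one would land in $U$, a concentrating one in $V$). Granting this bound, a standard pseudo-gradient flow on the $C^1$-manifold $\M$ produces an odd continuous $\eta : \M \to \M$ with
\[
\eta\seq{\set{u \in \M : \Phi(u) \le c + \eps} \setminus (U \cup V)} \subset \set{u \in \M : \Phi(u) \le c - \eps}.
\]

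Finally I would run the genus count. Because $U$ and $V$ are separated symmetric sets of genus at most $1$, their union satisfies $\gamma(\closure{U \cup V}) \le 1$: for disjoint closed symmetric sets the genus of the union equals the larger of the two genera, realized by pasting the two odd maps into $\R \setminus \set{0}$ (and $\gamma(V) \le 1$ follows from the odd, locally constant weak-limit assignment $V \to W$). Since $c_{k+1} = c$, I would choose $A \in \A_{k+1}$ with $\max_{u \in A} \Phi(u) < c + \eps$ and set $B := \closure{A \setminus (U \cup V)}$, a compact symmetric set contained in $\set{u \in \M : \Phi(u) \le c + \eps}$ and disjoint from the open set $U \cup V$; it is nonempty, since otherwise $A \subset \closure{U \cup V}$ would force $\gamma(A) \le 1$. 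Then
\[
\gamma(B) \ge \gamma(A) - \gamma(\closure{U \cup V}) \ge (k + 2) - 1 = k + 1,
\]
so $B \in \A_k$, and hence $\eta(B) \in \A_k$ because odd continuous maps do not decrease the genus. But
\[
\max_{u \in \eta(B)} \Phi(u) \le c - \eps < c = c_k,
\]
contradicting the definition of $c_k$. Therefore $K_c$ or $K_{c - \frac{1}{N} S^{N/p}}$ is infinite, as claimed.

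I expect the main obstacle to be the deformation step. Unlike in the compact case, $\restr{\Phi}{\M}$ does not satisfy \PS{c}, so the real work is to pin down the asymptotic profile of concentrating \PS{c} sequences from the Mercuri--Willem decomposition, to carve out a neighborhood $V$ of the resulting one-bubble configurations that is separated from $U$ while still having genus at most $1$, and to secure the lower gradient bound off $U \cup V$. Once these points are settled, the genus bookkeeping above is routine.
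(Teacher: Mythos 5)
Your strategy is genuinely different from the paper's, but as written it has real gaps, and they sit precisely at the three points you yourself defer to the end: the construction of $V$, the genus bound $\gamma(\closure{U \cup V}) \le 1$, and the separation $\closure{U} \cap \closure{V} = \emptyset$. First, "$\gamma(V) \le 1$ follows from the odd, locally constant weak-limit assignment $V \to W$" is not a proof: a point of a norm-neighborhood $V$ of the one-bubble set is merely a function close to some $w + B_{y,\lambda}$ (here $B_{y,\lambda}$ a rescaled Talenti function); no weak-limit map is defined on $V$, let alone continuous, and to produce an odd map $V \to \R \setminus \set{0}$ you must realize $V$ as a disjoint union of open sets indexed oddly by the finite set $W$, which in turn needs positive lower bounds on the distances between the bubble sets over distinct elements of $W$. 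Second, the separation from $U$ can fail as stated: the one-bubble set contains configurations $w + B_{y,\lambda}$ with \emph{moderate} parameters, and these do not concentrate; nothing you have said excludes that such a configuration is arbitrarily close to, or even equal to, a point of $K_c$, in which case no separated $V \supset \set{\text{all one-bubble configurations}}$ exists. The repair is to put into $V$ only configurations with degenerate parameters ($\lambda$ small or large, $|y|$ large) and then to prove that the bubbles produced by \PS{c} sequences are necessarily degenerate; that fact comes from the full Mercuri--Willem decomposition (strong convergence of $u_j - v_0 - B_j$ together with degenerating rescalings), not from the weak convergence recorded in Lemma \ref{Lemma 2.2}, so it must be invoked and justified separately. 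Third, even granting the gradient bound off $U \cup V$, the deformation statement is not automatic: flow lines starting outside $U \cup V$ may enter $U \cup V$, where there is no gradient control, so the usual pseudo-gradient argument has to be redone with nested neighborhoods. None of these steps is hopeless, but together they \emph{are} the proof, not its periphery.

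For contrast, the paper's own argument (following Devillanova--Solimini) needs none of this machinery. If the critical points $v_1, \dots, v_m$ at the levels $c_k$ and $c_k - \frac{1}{N} S^{N/p}$ were finitely many, one picks a bounded linear functional $l$ with $l(v_i) \ne 0$ for every $i$. Given minimax sets $A_j \in \A_{k+1}$ with $\max \Phi(A_j) \to c_{k+1} = c_k$, the slices $\tilde A_j = \set{u \in A_j : l(u) = 0}$ lose at most one unit of genus, hence lie in $\A_k$ and satisfy $\max \Phi(\tilde A_j) \to c_k$. Ghoussoub's minimax principle then provides a \PS{c_k} sequence $(u_j)$ with $\dist{u_j}{\tilde A_j} \to 0$; by Lemma \ref{Lemma 2.2} a subsequence converges weakly to some $v_i$, and since $l$ vanishes on $\tilde A_j$, is bounded, and is weakly continuous, one gets $l(v_i) = 0$, a contradiction. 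The observation your approach misses is that once the test object is a \emph{linear} functional, weak convergence --- exactly what Lemma \ref{Lemma 2.2} delivers --- suffices; no deformation lemma, no neighborhood of the critical set at infinity, and no genus estimate for non-compact sets is ever needed.
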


\begin{proof}
Suppose $\Phi$ has only finitely many critical points $v_1,\dots,v_m$ at the levels $c_k$ and $c_k - \dfrac{1}{N}\, S^{N/p}$, and let $\set{w_1,\dots,w_n}$ be a basis for their span. We have
\[
v_i = \sum_{j=1}^n a_{ij}\, w_j, \quad i = 1,\dots,m
\]
for some $a_{ij} \in \R$. Take $(b_1,\dots,b_n) \in \R^n$ such that
\[
\sum_{j=1}^n a_{ij}\, b_j \ne 0, \quad i = 1,\dots,m,
\]
and let $l$ be a bounded linear functional on $D^{1,p}(\R^N)$ such that $l(w_j) = b_j,\, j = 1,\dots,n$. Then
\begin{equation} \label{2.11}
l(v_i) = \sum_{j=1}^n a_{ij}\, l(w_j) = \sum_{j=1}^n a_{ij}\, b_j \ne 0, \quad i = 1,\dots,m.
\end{equation}

Now take a sequence $\seq{A_j} \subset \A_{k+1}$ such that $\max \Phi(A_j) \to c_{k+1}$, and let
\[
\tilde{A}_j = \bgset{u \in A_j : l(u) = 0}.
\]
By the monotonicity of the genus,
\[
k + 2 \le \gamma(A_j) \le \gamma(\tilde{A}_j) + \gamma(A_j \setminus \tilde{A}_j),
\]
and $\gamma(A_j \setminus \tilde{A}_j) \le 1$ since $\restr{l}{A_j \setminus \tilde{A}_j}$ is an odd continuous mapping into $\R \setminus \set{0}$, so $\gamma(\tilde{A}_j) \ge k + 1$ and hence $\tilde{A}_j \in \A_k$. Then
\[
c_k \le \max_{u \in \tilde{A}_j}\, \Phi(u) \le \max_{u \in A_j}\, \Phi(u) \to c_{k+1} = c_k,
\]
so $\max \Phi(\tilde{A}_j) \to c_k$. By Ghoussoub \cite[Theorem 1]{MR1103905}, then $\restr{\Phi}{\M}$ has a \PS{c_k} sequence $\seq{u_j}$ such that
\begin{equation} \label{2.12}
\dist{u_j}{\tilde{A}_j} \to 0.
\end{equation}
Since $\dfrac{1}{N}\, S^{N/p} < c_k < \dfrac{2}{N}\, S^{N/p}$ by Lemmas \ref{Lemma 2.5} and \ref{Lemma 2.6}, then a renamed subsequence converges weakly to some $v_i$ by Lemma \ref{Lemma 2.2}. Then \eqref{2.12} implies that $l(u_j) \to 0$ and hence $l(v_i) = 0$, contradicting \eqref{2.11}.
\end{proof}

We are now ready to prove Theorem \ref{Theorem 1.1}.

\begin{proof}[Proof of Theorem \ref{Theorem 1.1}]
By Lemmas \ref{Lemma 2.5} and \ref{Lemma 2.6},
\[
\frac{1}{N}\, S^{N/p} < c_1 \le \cdots \le c_N < \frac{2}{N}\, S^{N/p},
\]
and hence $c_k$ or $c_k - \dfrac{1}{N}\, S^{N/p}$ is a critical level of $\Phi$ for $k = 1,\dots,N$ by Lemma \ref{Lemma 2.2}. If $c_k = c_{k+1}$ for some $k \in \set{1,...,N - 1}$, then $\Phi$ has infinitely many critical points by Lemma \ref{Lemma 2.7} and we are done, so suppose that this is not the case. Then
\[
c_1 - \frac{1}{N}\, S^{N/p} < \cdots < c_N - \frac{1}{N}\, S^{N/p} < c_1 < \cdots < c_N
\]
and at least $N$ of these levels are critical for $\Phi$.
\end{proof}

\section{Bounded domains}

Consider the problem
\begin{equation} \label{3.1}
- \Delta_p\, u = a(x)\, |u|^{p-2}\, u + |u|^{p^\ast - 2}\, u, \quad u \in W^{1,p}_0(\Omega),
\end{equation}
where $\Omega$ is a smooth bounded domain in $\R^N$, $1 < p < N$, $a \in L^{N/p}(\Omega)$ satisfies
\begin{equation} \label{3.2}
\inf_{u \in W^{1,p}_0(\Omega),\; \pnorm{\nabla u} = 1}\, \int_\Omega \big(|\nabla u|^p - a(x)\, |u|^p\big)\, dx > 0
\end{equation}
and
\begin{equation} \label{3.3}
\essinf_{x \in B_\delta(x_0)}\, a(x) > 0
\end{equation}
for some open ball $B_\delta(x_0) \subset \Omega$, and $\pnorm[q]{\cdot}$ denotes the norm in $L^q(\Omega)$.

\begin{theorem} \label{Theorem 3.1}
Assume that $N \ge p^2$, and \eqref{3.2} and \eqref{3.3} hold. If
\[
\spnorm[N/p]{a^+} < \big(1 - 2^{-p/N}\big)\, S,
\]
then problem \eqref{3.1} has at least $N$ pairs of nontrivial solutions.
\end{theorem}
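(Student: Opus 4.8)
The plan is to repeat the entire argument of Section 2 verbatim, replacing $\D^{1,p}(\R^N)$ by $W^{1,p}_0(\Omega)$ and every integral over $\R^N$ by the corresponding integral over $\Omega$. Thus I would work with the functional
\[
\Phi(u) = \int_\Omega \left[\frac{1}{p}\, \big(|\nabla u|^p - a(x)\, |u|^p\big) - \frac{1}{p^\ast}\, |u|^{p^\ast}\right] dx
\]
on $W^{1,p}_0(\Omega)$, restricted to the Nehari manifold $\M$ defined exactly as before but with integrals over $\Omega$; condition \eqref{3.2} again keeps $\M$ bounded away from the origin. The equivalence of \PS{c} sequences for $\restr{\Phi}{\M}$ and for $\Phi$ (Lemma \ref{Lemma 2.1}) carries over word for word, since its proof uses only boundedness of the sequence, the Nehari constraint, and the coercivity condition \eqref{3.2}.

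The decisive ingredient is the bounded-domain analog of the compactness Lemma \ref{Lemma 2.2}. Here I would invoke the global compactness result (a Struwe-type profile decomposition) for the critical $p$-Laplacian on a smooth bounded domain: any \PS{c} sequence has a subsequence converging weakly to a solution $v_0 \in W^{1,p}_0(\Omega)$ of \eqref{3.1}, together with finitely many \emph{bubbles} $v_1, \dots, v_k$ that are nontrivial solutions of $-\Delta_p u = |u|^{p^\ast - 2}u$ on all of $\R^N$, with the same energy splitting \eqref{2.3}. The crucial point is that the bubbles are $\R^N$ solutions, \emph{independent of the domain}, so the classification used in Lemma \ref{Lemma 2.2} applies unchanged: a positive bubble carries energy $\frac{1}{N}\, S^{N/p}$ (Sciunzi), while a sign-changing one carries at least $\frac{2}{N}\, S^{N/p}$. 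Hence, for $c$ in the range \eqref{2.2}, the same counting forces $k \le 1$, and the conclusion (the weak limit $v_0$ is nontrivial, with $\Phi(v_0) = c$ or $\Phi(v_0) = c - \frac{1}{N}\, S^{N/p}$) is identical.

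The level estimates then transfer without change because the best Sobolev constant $S$ is the \emph{same} for $W^{1,p}_0(\Omega)$ as for $\D^{1,p}(\R^N)$: extending by zero gives $S\, \big(\int_\Omega |u|^{p^\ast}\, dx\big)^{p/p^\ast} \le \int_\Omega |\nabla u|^p\, dx$, so the lower bounds of Lemmas \ref{Lemma 2.3}--\ref{Lemma 2.5} (yielding $c_1 > \frac{1}{N}\, S^{N/p}$ under the hypothesis on $\spnorm[N/p]{a^+}$) hold verbatim. For the upper bound $c_N < \frac{2}{N}\, S^{N/p}$ (Lemma \ref{Lemma 2.6}) I would first translate and dilate so that the ball in \eqref{3.3} becomes $B_1(0) \subset \Omega$, then use the same truncated Talenti functions $\tilde{u}_\eps$: since $\eta$ vanishes for $s \ge 1/2$, both translates appearing in the definition of $\varphi$ are supported inside $B_1(0)$ and hence lie in $W^{1,p}_0(\Omega)$, while the estimates \eqref{2.6}--\eqref{2.8} together with $a \ge \lambda$ on $B_1(0)$ give exactly the same bound. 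The abstract deformation argument of Lemma \ref{Lemma 2.7} and the final counting in the proof of Theorem \ref{Theorem 1.1} are purely variational and require no modification.

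The only genuinely new point is the bounded-domain compactness statement replacing Lemma \ref{Lemma 2.2}, and I expect this to be the main obstacle: one must confirm that the chosen global compactness theorem indeed yields, on a smooth bounded domain, a decomposition into a weak-limit solution on $\Omega$ plus $\R^N$-bubbles with the sharp energy quantization, so that the energy gap forcing $k \le 1$ survives. In particular one should check that no lower-energy boundary profiles arise, which is guaranteed by the nonexistence of nontrivial Dirichlet solutions on a half-space. Everything else is a mechanical transcription of Section 2.
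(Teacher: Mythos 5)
Your overall strategy coincides with the paper's: transcribe Section 2 with $W^{1,p}_0(\Omega)$ in place of $\D^{1,p}(\R^N)$, and replace Lemma \ref{Lemma 2.2} by a bounded-domain global compactness statement (the paper's Proposition \ref{Proposition 3.2}). The transcription of Lemmas \ref{Lemma 2.1} and \ref{Lemma 2.3}--\ref{Lemma 2.7} is indeed routine, as you say. The gap lies exactly in the step you single out as the main obstacle, and your proposed resolution of it is not available. For $p \ne 2$ there is no profile decomposition with only $\R^N$-bubbles, because there is no known nonexistence theorem for nontrivial, possibly \emph{sign-changing}, solutions of $-\Delta_p u = |u|^{p^\ast - 2}u$ in $\D^{1,p}_0(\R^N_+)$. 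The half-space nonexistence you invoke is a $p = 2$ fact (Esteban--Lions): one first shows via a translation (Pohozaev-type) identity that $\nabla u$ vanishes on $\partial \R^N_+$, and then concludes $u \equiv 0$ by unique continuation. Unique continuation is an open problem for the $p$-Laplacian, and this is precisely why the decomposition of Mercuri and Willem \cite{MR2644751} on a bounded domain --- and the paper's Proposition \ref{Proposition 3.2} --- must allow, besides $\R^N$-bubbles, profiles $v_i \in \D^{1,p}_0(\R^N_+)$ attached to concentration points with $\dist{y^i_j}{\bdry{\Omega}}/\lambda^i_j$ bounded. Without unique continuation one can rule out only \emph{constant-sign} half-space profiles: the boundary identity plus extension by zero produces a nontrivial nonnegative entire solution vanishing on a half-space, contradicting V\'{a}zquez's strong maximum principle; sign-changing half-space solutions cannot be excluded this way.

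Consequently, the compactness theorem you plan to cite does not exist in the form you state it, and the argument as written does not close. The repair --- which is what the paper's Proposition \ref{Proposition 3.2} together with ``the rest of the proof is unchanged'' amounts to --- is to keep the half-space bubbles in the decomposition and eliminate them inside the proof of the bounded-domain analog of Lemma \ref{Lemma 2.2} by a case analysis: (i) any sign-changing bubble, whether on $\R^N$ or on $\R^N_+$, satisfies $\Phi_\infty(v_i) \ge \frac{2}{N}\, S^{N/p}$, since the doubling argument behind \cite[Lemma 2.1]{MR3250483} (test with $v_i^\pm$ and apply the Sobolev inequality, whose constant $S$ is the same for every domain) works in $\D^{1,p}_0$ of an arbitrary domain; by \eqref{2.2} and the nonnegativity of all terms in the energy splitting, such bubbles cannot occur; (ii) constant-sign bubbles on $\R^N_+$ are excluded by the positive-solution nonexistence argument sketched above; (iii) the surviving bubbles are constant-sign solutions on $\R^N$, to which Sciunzi's classification applies, giving $\Phi_\infty(v_i) = \frac{1}{N}\, S^{N/p}$ exactly. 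With this trichotomy the counting again forces $k \le 1$ and yields the conclusion of Lemma \ref{Lemma 2.2}; the rest of your transcription then goes through.
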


\begin{proof}
We argue as in the proof of Theorem \ref{Theorem 1.1}. Lemma \ref{Lemma 2.2} is now proved using the variant global compactness result in Proposition \ref{Proposition 3.2} below, which readily follows by arguing as in Mercuri and Willem \cite{MR2644751} (see also Mercuri et al.\! \cite[Remark 2.1]{MR3250483}). The rest of the proof is unchanged.
\end{proof}

\begin{proposition} \label{Proposition 3.2}
Let $\seq{u_j} \subset W^{1,p}_0(\Omega)$ be a {\em \PS{c}} sequence for the functional
\[
\Phi(u) = \int_\Omega \left[\frac{1}{p}\, \big(|\nabla u|^p - a(x)\, |u|^p\big) - \frac{1}{p^\ast}\, |u|^{p^\ast}\right] dx, \quad u \in W^{1,p}_0(\Omega).
\]
Then a renamed subsequence converges weakly to a (possibly trivial) solution $v_0 \in W^{1,p}_0(\Omega)$ of problem \eqref{3.1}, and there exist nontrivial solutions $v_1,\dots,v_k \in \D^{1,p}(\R^N)$ or (up to a rotation and a translation) $\D^{1,p}_0(\R^N_+)$, $k \ge 0$, of $- \Delta_p\, u = |u|^{p^\ast - 2}\, u$ and sequences $(y^i_j) \subset \closure{\Omega},\, (\lambda^i_j) \subset \R_+,\, i = 1,\dots,k$ such that $\dist{y^i_j}{\bdry{\Omega}}/\lambda^i_j \to \infty$ in the case of $\R^N$ and $\dist{y^i_j}{\bdry{\Omega}}/\lambda^i_j$ is bounded in the case of $\R^N_+$,
\begin{gather*}
\norm{u_j - v_0 - \sum_{i=1}^k (\lambda^i_j)^{-(N-p)/p}\, v_i((\cdot - y^i_j)/\lambda^i_j)} \to 0,\\[10pt]
\norm{u_j}^p \to \sum_{i=0}^k \norm{v_i}^p,\\[10pt]
\Phi(v_0) + \sum_{i=1}^k \Phi_\infty(v_i) = c,
\end{gather*}
where $\Phi_\infty$ is defined on $\D^{1,p}(\R^N)$ and $\D^{1,p}_0(\R^N_+)$ as in the proof of Lemma \ref{Lemma 2.2}.
\end{proposition}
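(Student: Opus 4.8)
The plan is to reproduce, on the bounded domain $\Omega$, the iteration scheme underlying the global compactness theorem of Mercuri and Willem \cite{MR2644751}, inserting at each stage the dichotomy between interior and boundary concentration. First I would check that the Palais--Smale sequence $\seq{u_j}$ is bounded in $W^{1,p}_0(\Omega)$: combining $\Phi(u_j) \to c$ with $\langle \Phi'(u_j), u_j\rangle = \o(\norm{u_j})$ yields a bound on $\dint_\Omega (|\nabla u_j|^p - a(x)\, |u_j|^p)\, dx$, and then \eqref{3.2} bounds $\norm{u_j}$. Passing to a subsequence, $u_j \rightharpoonup v_0$ in $W^{1,p}_0(\Omega)$. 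The first genuinely nonlinear point is that $v_0$ solves \eqref{3.1}: since $-\Delta_p$ is nonlinear one cannot simply take weak limits in $\Phi'(u_j)$, so I would invoke the almost everywhere convergence $\nabla u_j \to \nabla v_0$ available for $p$-Laplacian Palais--Smale sequences (a Boccardo--Murat type argument), which identifies the limiting equation.

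Next I would strip off the subcritical perturbation. Because $a \in L^{N/p}(\Omega)$ and $N/p$ is conjugate to $p^\ast/p$, the absolute continuity of the integral, together with the fact that any loss of compactness of $\seq{u_j}$ is carried by measures supported on null sets, gives $\dint_\Omega a(x)\, |u_j|^p\, dx \to \dint_\Omega a(x)\, |v_0|^p\, dx$. Hence the residual $w_j := u_j - v_0$ is a Palais--Smale sequence for $\Phi_\infty$ at level $c - \Phi(v_0)$, with the $a$-term absent. If $w_j \to 0$ strongly we finish with $k = 0$; otherwise the critical nonlinearity concentrates, and I would locate a scale $\lambda^1_j \to 0$ and a center $y^1_j \in \closure{\Omega}$ by maximizing a L\'evy concentration function, so that the rescaled sequence $(\lambda^1_j)^{(N-p)/p}\, w_j(\lambda^1_j\, \cdot + y^1_j)$ is bounded in $\D^{1,p}$ and converges weakly to a nontrivial profile $v_1$.

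Here the bounded-domain dichotomy enters, and this is the step I expect to be the main obstacle. After rescaling, the domain becomes $(\Omega - y^1_j)/\lambda^1_j$, whose behavior is governed by $\dist{y^1_j}{\bdry{\Omega}}/\lambda^1_j$. If this ratio tends to $\infty$, the rescaled domains exhaust $\R^N$ and $v_1$ solves $-\Delta_p\, u = |u|^{p^\ast - 2}\, u$ on $\R^N$; if it stays bounded, the smoothness of $\bdry{\Omega}$ guarantees that, after a rotation, the rescaled boundary flattens to the hyperplane $\bdry{\R^N_+}$, the zero trace passes to the limit, and $v_1 \in \D^{1,p}_0(\R^N_+)$ solves the same equation on the half-space. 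Making this limit rigorous for $p \ne 2$ is delicate: one must again produce almost everywhere convergence of the rescaled gradients in order to apply the Brezis--Lieb lemma to both $|\nabla w_j|^p$ and the vector field $|\nabla w_j|^{p-2}\, \nabla w_j$, and simultaneously control the flattening of the smooth boundary under the rescaling (this is precisely the variant indicated in Mercuri et al.\! \cite{MR3250483}).

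Finally I would iterate. Subtracting the first bubble, $w^2_j := w_j - (\lambda^1_j)^{-(N-p)/p}\, v_1((\cdot - y^1_j)/\lambda^1_j)$ is again a Palais--Smale sequence for $\Phi_\infty$, and the Brezis--Lieb splitting gives $\norm{w^2_j}^p = \norm{w_j}^p - \norm{v_1}^p + \o(1)$ together with the corresponding subtraction of energies. Each extracted profile carries energy bounded below by a fixed positive constant, namely $\Phi_\infty(v_i) \ge \dfrac{1}{N}\, S^{N/p}$ in the $\R^N$ case and a positive amount in the $\R^N_+$ case, so, the total energy $c$ being finite, the procedure terminates after finitely many steps $k \ge 0$. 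Collecting the rescaled profiles then yields the strong convergence of the remainder, the norm identity $\norm{u_j}^p \to \sum_{i=0}^k \norm{v_i}^p$, and the energy decomposition $\Phi(v_0) + \sum_{i=1}^k \Phi_\infty(v_i) = c$, which are exactly the three asserted conclusions.
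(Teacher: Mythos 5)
Your proposal is correct and takes essentially the same route as the paper: the paper gives no independent proof of Proposition \ref{Proposition 3.2}, saying only that it ``readily follows by arguing as in Mercuri and Willem \cite{MR2644751} (see also Mercuri et al.\ \cite{MR3250483})'', and your sketch --- boundedness of the Palais--Smale sequence, weak limit identified via a.e.\ gradient convergence, removal of the subcritical $a$-term by compactness of $W^{1,p}_0(\Omega) \hookrightarrow L^p(\Omega)$ and duality, iterative bubble extraction with the interior/boundary dichotomy governed by $\dist{y^i_j}{\bdry{\Omega}}/\lambda^i_j$, and energy quantization forcing termination --- is precisely the Struwe-type iteration that those references carry out for the $p$-Laplacian. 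The two points you single out as delicate (the splitting of the derivative, which for $p \ne 2$ requires Brezis--Lieb-type convergence of the vector field $|\nabla w_j|^{p-2}\, \nabla w_j$, and the flattening of the rescaled boundary in the half-space case) are indeed exactly the technical core handled in the cited works.
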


Finally, to see that Corollary \ref{Corollary 1.2} in the introduction follows from Theorem \ref{Theorem 3.1}, we note that for all $u \in W^{1,p}_0(\Omega)$ with $\pnorm{\nabla u} = 1$,
\[
\int_\Omega \big(|\nabla u|^p - \lambda\, |u|^p\big)\, dx \ge 1 - \lambda \left(\int_\Omega |u|^{p^\ast}\, dx\right)^{p/p^\ast} \vol{\Omega}^{p/N} \ge 1 - \lambda\, S^{-1} \vol{\Omega}^{p/N} > 2^{-p/N}
\]
by the H\"{o}lder inequality, \eqref{1.4}, and \eqref{1.7}.

\section*{Acknowledgements}
\noindent The authors have been partially supported by the Gruppo Nazionale per l'Analisi Matematica, la Probabilit\`{a} e le loro Applicazioni (GNAMPA) of the Istituto Nazionale di Alta Matematica (INdAM).

\end{document}